\newenvironment{proof}{
\noindent {\bf Proof.\hskip 3mm}}%
{\mbox{}\hfill\rule{0.5em}{0.809em}\par}
\newtheorem{theorem}{{\bf Theorem}}[section]
\newtheorem{lemma}{{\bf Lemma}}[section]
\newtheorem{conjecture}{{\bf Conjecture }}[section]
\newtheorem{problem}[conjecture]{{\bf Problem}}
\newtheorem{fact}{{\bf Fact}}[section]
\newenvironment{pf1}{\noindent {\bf Proof of Theorem \ref{main-result}.}}{\hfill\rule{2mm}{3mm}\par\medskip}
\newcommand\path{maximal bichromatic path }
\begin{document}
\title{ An improved bound on acyclic chromatic index of planar graphs
\thanks{ This work is  supported by  research grants NSFC (11001055) and NSFFP (2010J05004, 2011J06001).
}}
\author{Yue Guan, Jianfeng Hou\thanks{The corresponding author: jfhou@fzu.edu.cn},
Yingyuan Yang\\Center for Discrete Mathematics, Fuzhou University,
\\Fujian, P. R. China, 350002}
\date{}
\maketitle
\textbf{Abstract:} A proper edge coloring of a graph $G$ is called
acyclic if there is no bichromatic cycle in $G$. The acyclic
chromatic index of $G$, denoted by $\chi'_a(G)$, is the least number
of colors $k$ such that $G$ has an acyclic edge $k$-coloring.
Basavaraju et al. [Acyclic edge-coloring of planar graphs, SIAM J.
Discrete Math. 25 (2) (2011), 463--478] showed that $\chi'_a(G)\le
\Delta(G)+12$ for planar graphs $G$ with maximum degree $\Delta(G)$. In this paper, the bound is improved to $\Delta(G)+10$.

\textbf{Keywords:} acyclic edge coloring, planar graph, critical
graph

\section{Introduction}

In this paper, all considered graphs are finite, simple and
undirected. We use $V(G)$, $E(G)$, $\delta(G)$ and $\Delta(G)$ (or
$V, E, \delta$ and $\Delta$ for simple) to denote the vertex set,
the edge set, the minimum degree and the maximum degree of a graph
$G$, respectively. For a vertex $v\in V(G)$, let $N(v)$ denote the
set of vertices adjacent to $v$ and $d(v)=|N(v)|$ denote the
$degree$ of $v$. Let $N_k(v)=\{x\in N(v)|d(x)=k\}$ and
$n_k(v)=|N_k(v)|$. A vertex of degree $k$ is called a {\em
$k$-vertex}. We write a {\em $k^+$-vertex} for a vertex of degree at
least $k$, and a {\em $k^-$-vertex} for that of degree at most $k$.
The $girth$ of a graph $G$, denoted by $g(G)$, is the length of its
shortest cycle.

As usual $[k]$ stands for the set $\{1,2,\ldots,k\}$.

A $proper$ $edge$ $k$-$coloring$ of a graph $G$ is a mapping $\phi$
from  $E(G)$ to the color set $[k]$ such that no pair of adjacent
edges are colored with the same color.  A proper  edge coloring of a
graph $G$ is called {\em acyclic} if there is no bichromatic cycle
in $G$. In other words, if the union of any two color classes
induces a subgraph of $G$ which is a forest.  The {\em acyclic
chromatic index} of $G$, denoted by $\chi'_a(G)$, is the least
number of colors $k$ such that $G$ has an acyclic edge $k$-coloring.

Acyclic edge coloring has been widely studied over the past twenty
years. The first general linear upper bound on $\chi'_a(G)$ was
found by Alon et al. \cite{Alon1991} who proved that
$\chi'_a(G)\leq{64\Delta(G)}$. This bound was improved to
$16\Delta(G)$ by Molloy and Reed \cite{molloy1998}.

In 2001, Alon, Sudakov and Zaks \cite{Alon2001} stated the Acyclic
Edge Coloring Conjecture, which says that
$\chi'_a(G)\leq{\Delta(G)+2}$ for every graph $G$. Though the best known upper bound for general case is far from the conjecture $\Delta(G)+2$, the conjecture has been shown to be true for some special classes of graphs, including graphs with maximum degree at most three \cite{and2012},  non
regular graphs with maximum degree at most four \cite{basa2009}, outerplanar graphs \cite{hou2009}, graphs with large girth \cite{Alon2001}, and so on.

Fiedorowicz et al. \cite{fie2008} gave an upper bound of
$2\Delta(G)+29$ for planar graphs and of $\Delta(G)+6$ for
triangle-free planar graphs.  Independently, Hou et al.
\cite{hou2008} proved that $\chi'_a(G)\leq
\max\{2\Delta(G)-2,\Delta(G)+22\}$ for planar graphs and
$\chi'_a(G)\leq \Delta(G)+2$ for planar graphs with girth at least
5. Borowiecki and Fiedorowicz \cite{boro2009} showed that $\chi'_a(G)\leq
\Delta(G)+15$ for  planar graphs without cycles of length $4$.  Hou et al. \cite{hou2011} improved the bound to $\Delta(G)+4$.
 Recently, Borowiecki et al. \cite{basaSIAM} showed that
$\chi_{a}'(G)\leq \Delta(G)+12$ for planar graphs. In this
paper, the bound is improved to $\Delta(G)+10$ as follows.

\begin{theorem}\label{main-result}
Let $G$ be a planar graph with maximum degree $\Delta(G)$. Then
$\chi_{a}'(G)\leq \Delta(G)+10$.
\end{theorem}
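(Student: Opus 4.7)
The plan is to adopt the now-standard critical-graph plus discharging framework that gives the $\Delta(G)+12$ bound in \cite{basaSIAM}, and to squeeze out two more colors by both enlarging the list of reducible configurations and redesigning the discharging rules. Set $k=\Delta(G)+10$, and suppose for contradiction that $G$ is a counterexample to Theorem~\ref{main-result} minimizing $|V(G)|+|E(G)|$. By minimality, every proper subgraph of $G$ admits an acyclic edge $k$-coloring. Fix a planar embedding of $G$ with face set $F(G)$.

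Next I would prove a list of structural (reducibility) lemmas asserting that $G$ avoids various local configurations around its low-degree vertices. The engine throughout is as follows: delete a carefully chosen edge $uv$ from $G$, take an acyclic edge $k$-coloring $\phi$ of $G-uv$ that exists by minimality, and bound the number of colors forbidden for $uv$. A color $\alpha$ is forbidden either because it already occurs at an edge incident to $u$ or $v$, or because closing a maximal bichromatic $(\alpha,\beta)$-path between $u$ and $v$ would create a bichromatic cycle. By iteratively rerouting colors along such \path s (the uncoloring/recoloring technique systematically used in \cite{basaSIAM, hou2011}), one forces the number of forbidden colors below $k$, extending $\phi$ to $G$ and contradicting the choice of $G$. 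Beyond the configurations already ruled out in \cite{basaSIAM} (isolated $1$-vertices, adjacent pairs of $2$-vertices, certain $3$-vertex neighborhoods, etc.), I would push this analysis to forbid richer configurations such as longer runs of $2$- and $3$-vertices around common faces and $k$-vertices with additional small neighbors, tuned to the budget of two extra colors.

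For the global step I would invoke Euler's formula in the form
\[
\sum_{v\in V(G)}(d(v)-4)+\sum_{f\in F(G)}(d(f)-4)=-8,
\]
assigning initial charge $d(v)-4$ to every vertex $v$ and $d(f)-4$ to every face $f$. I would then design discharging rules that ship charge from $5^{+}$-vertices and $5^{+}$-faces toward $2$- and $3$-vertices and $3$-faces, with amounts chosen so that, using the reducibility lemmas from the previous step, every vertex and face ends with non-negative charge. Summing the final charges yields $0\le -8$, the desired contradiction, so no such $G$ exists.

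The hard part will unquestionably be the reducibility lemmas. Moving from $\Delta(G)+12$ down to $\Delta(G)+10$ leaves two fewer free colors to absorb the forbidden set at a deleted edge, so configurations that were comfortably reducible in \cite{basaSIAM} by a single Kempe-type swap may here require nested sequences of swaps, each of which must be shown not to destroy the acyclicity already achieved at previous steps. Carefully tracking, after each swap, which colors still occur at $u$ and $v$ and which \path s remain between them is the delicate combinatorial bookkeeping that I expect to occupy the bulk of the proof; by contrast, the discharging calculation, once the reducibility list is fixed, should be a routine verification.
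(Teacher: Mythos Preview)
Your high-level plan---minimal counterexample, reducible configurations via recoloring along bichromatic paths, discharging---is the framework the paper uses, but the execution differs in organization and, more importantly, in which configurations carry the argument.

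The paper does not discharge on $G$ with $d(\cdot)-4$ charges. It proves a free-standing structural lemma (Lemma~\ref{unavoidable configurations}) for \emph{arbitrary} connected planar graphs, via charges $2d(v)-6$ and $d(f)-6$ on a triangulation: any such graph contains a $2^-$-vertex, a $3$-vertex with a neighbor of degree $\le 11$, a $4$-vertex with two neighbors of degrees $\le 7$ and $\le 9$, or a $5$-vertex with three neighbors of degrees $\le 6,7,8$. Critical-graph properties are developed separately (Lemmas~\ref{neighbor of 2-vertex}--\ref{uv-edge-colored-2}) and then combined: delete the $2$-vertices of $G$ to form $H$, apply the structural lemma to $H$, and use Lemmas~\ref{neighbor of 2-vertex}, \ref{number of 2-vertex} and \ref{lem3vertex} to rule out $(\mathcal{A}_1)$ and $(\mathcal{A}_2)$ in $G$. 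This modular split keeps the discharging entirely coloring-free.

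The reducibility burden then falls solely on the $4$-vertex configuration $(\mathcal{A}_3)$ and the $5$-vertex configuration $(\mathcal{A}_4)$---not on ``longer runs of $2$- and $3$-vertices around common faces'' as you anticipate. What buys the two colors over \cite{basaSIAM} is a pair of new counting lemmas: for an edge $uv$ with $d(u)+d(v)\le k-\Delta(G)+4$, any acyclic $k$-coloring $\phi$ of $G-uv$ has $|C_\phi(u)\cap C_\phi(v)|\ge 2$ (Lemma~\ref{uv-edge-colored}), and when equality holds the multiset $\biguplus_i F_\phi(vv_i)$ contains at least $d(u)$ colors from $C_\phi(u)\cup C_\phi(v)$ (Lemma~\ref{uv-edge-colored-2}). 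These drive a short case split on $|C_\phi(v)\cap C_\phi(v_1)|\in\{2,3,4\}$. Your proposal correctly locates the hard work in the recoloring lemmas but does not isolate which ones; Lemmas~\ref{uv-edge-colored} and \ref{uv-edge-colored-2} are the missing content, and the configurations you guess at are not the ones that get reduced.
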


In Section 2, we give a structural lemma of planar graphs using discharging method. In Section 3, we consider the properties of so-called acyclically edge critical graphs. In Section 4, we give the proof of Theorem \ref{main-result}.

\section{A structural lemma}

In this section, we give a structural lemma of planar graphs using the well-know discharging method. This lemma plays an important role in the proof of Theorem \ref{main-result}.

\begin{lemma}\label{unavoidable configurations}
Let $G$ be a connected  planar graph. Then there exists a vertex $v$ with $k$
neighbors $v_1,v_{2},\cdots,v_{k}$ with $d(v_{1})\leq\cdots\leq
d(v_{k})$ such that at least one of the following is true:

\medskip

$(\mathcal{A}_1)$ $k\leq 2$;

\medskip

$(\mathcal{A}_2)$ $k=3$ with $d(v_{1})\leq 11$;

\medskip

$(\mathcal{A}_3)$ $k=4$ with $d(v_{1})\leq 7$ and $d(v_{2})\leq 9$;

\medskip

$(\mathcal{A}_4)$ $k=5$ with $d(v_{1})\leq 6, d(v_{2})\leq 7$ and $d(v_{3})\leq
 8$.
\end{lemma}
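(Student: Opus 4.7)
I would argue by contradiction using the discharging method. Suppose $G$ is a counterexample: a connected planar graph for which no vertex witnesses any of $(\mathcal{A}_1)$--$(\mathcal{A}_4)$. Embed $G$ in the plane and use Euler's formula in the form
\[
\sum_{v \in V(G)} (d(v)-4) + \sum_{f \in F(G)} (\ell(f)-4) = -8,
\]
so I assign initial charge $\mu(v)=d(v)-4$ to each vertex and $\mu(f)=\ell(f)-4$ to each face. The goal is to redistribute charge so that every vertex and face ends with non-negative charge, contradicting the total $-8$.

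\textbf{Translating the failure of each configuration.} The first step is to read off the neighborhood restrictions forced by the assumed failure of $(\mathcal{A}_1)$--$(\mathcal{A}_4)$. Failure of $(\mathcal{A}_1)$ at every vertex gives $\delta(G)\ge 3$. Failure of $(\mathcal{A}_2)$ at every $3$-vertex forces all three of its neighbors to have degree $\ge 12$. Failure of $(\mathcal{A}_3)$ at every $4$-vertex $v$ means either all four neighbors of $v$ have degree $\ge 8$, or exactly one neighbor has degree $\le 7$ and the remaining three have degree $\ge 10$. Failure of $(\mathcal{A}_4)$ at every $5$-vertex $v$ splits into three sub-cases: all five neighbors of $v$ have degree $\ge 7$; or $v$ has one neighbor of degree $\le 6$ and the other four of degree $\ge 8$; or $v$ has at most two neighbors of degree $\le 7$ (with the smallest $\le 6$) and the remaining three of degree $\ge 9$. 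These will be the engine of the discharging.

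\textbf{Discharging rules.} Since the lemma makes no assertion about faces, I leave face charges untouched (using $\mu(f)\ge -1$ only when needed) and move charge only between vertices. The rules take the form: every neighbor of degree $\ge 12$ of a $3$-vertex sends $1/3$ of the unit deficit along each incident edge (so each $3$-vertex receives $3\cdot\frac{1}{3}\cdot \text{(something)} \ge 1$); each $4$-vertex pulls charge from its neighbors, with a bigger amount from a neighbor of degree $\ge 10$ and a smaller (or zero) amount from a neighbor of degree $\in[8,9]$, calibrated to match the two scenarios above; each $5$-vertex pulls smaller amounts from neighbors of degrees $\ge 7,\ge 8,\ge 9$ respectively. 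The numbers are chosen so that a $3$-vertex recovers a deficit of $1$, a $4$-vertex a deficit of $0$, a $5$-vertex a deficit of $1$, and so that a donor vertex of degree $d\ge 7$ parts with no more than $d-4$ in total. Triangular faces contribute $-1$ each, but since they share their boundary with vertices of high degree in any counterexample, the vertex-only discharging suffices (or, if not, a modest shift from faces of length $\ge 5$ to incident $3$-faces can absorb the defect).

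\textbf{Main obstacle.} The delicate part is calibrating the amounts so that the tight cases balance simultaneously. The two extremal cases I expect to pin down the rules are: (i) a $4$-vertex with exactly one neighbor of degree $7$ and three neighbors of degree $10$ (deficit $0$ but the $7$-neighbor is itself a potential sink under rule for $(\mathcal{A}_4)$-type $5$-vertices), and (ii) a $5$-vertex with neighbor degree sequence exactly $(7,8,8,8,8)$ or similar threshold configurations. The donor side also has a tight point at $d(u)=12$: such a $u$ may be adjacent to many $3$-vertices, each demanding a full unit. One checks that even if every neighbor of a $12$-vertex were a $3$-vertex, the total given away is at most $12 - 4 = 8$, which constrains the per-edge gift to $\le 2/3$; this limit is what forces the threshold in $(\mathcal{A}_2)$ to be $11$ (equivalently, donor degree $\ge 12$) rather than anything smaller. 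Once these thresholds are verified to be consistent, each vertex finishes with non-negative charge, contradicting the total $-8$ and completing the proof.
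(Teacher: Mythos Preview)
Your contradiction/discharging framework and your reading of what the failure of $(\mathcal{A}_1)$--$(\mathcal{A}_4)$ forces on neighbor degrees are both correct. But the proposal has a genuine gap in the treatment of faces, and this is not a detail that can be patched by a ``modest shift''.

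With the charge scheme $\mu(v)=d(v)-4$, $\mu(f)=\ell(f)-4$ and discharging only among vertices, the vertex charge sum is conserved and the face charge sum is conserved separately. Showing that every vertex ends nonnegative therefore only yields $\sum_v(d(v)-4)\ge 0$, which is no contradiction at all (this inequality holds in any triangulation with $\delta\ge 4$, for instance). To get a contradiction with the total $-8$ you must also make every face nonnegative, and your scheme leaves every triangle at $-1$. Your fallback of pushing charge from $\ge 5$-faces to $3$-faces cannot work in general: a counterexample could be (or be completed to) a triangulation, in which case there are no $\ge 5$-faces at all and every face sits at $-1$ with nothing to feed it. So the face deficit has to be paid by vertices, and once you are sending vertex charge to incident triangles you are no longer doing ``vertex-only'' discharging; the rules and the tight cases have to be redesigned around that. (There is also a minor slip: in the $d-4$ scheme a $5$-vertex has surplus $+1$, not a deficit.)

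For comparison, the paper's proof avoids this issue by first triangulating (which can only make the configurations harder to find), taking charges $2d(v)-6$ on vertices and $d(f)-6$ on faces, and then sending charge \emph{from vertices to their incident triangular faces}. All the delicate calibration is done on how much a $4$-, $5$-, or $6^+$-vertex sends to each incident triangle depending on which of its neighbors bounding that triangle is ``small''; the case analysis then lives on faces rather than on donor vertices. Your neighborhood analysis is exactly the input that those rules use, so if you rebuild the discharging so that vertices feed faces (or switch to the vertex-only inequality $\sum_v(d(v)-6)\le -12$ and redo the arithmetic), the argument can be completed---but as written, the face side is a hole, not a formality.
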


\begin{proof}  By contradiction, let $G$ be a planar graph
with a fixed embedding in the plane and suppose that $G$ is a
counterexample to the lemma.  Let $F(G)$ denote its face set. We may
assume that $G$ is a 2-connected triangulation, for otherwise, we
could add edges to $G$ obtaining a triangulation $G'$. If none of
$(\mathcal{A}_1)$-$(\mathcal{A}_4)$ holds for $G$, then clearly none
of $(\mathcal{A}_1)$-$(\mathcal{A}_4)$ holds for $G'$.

We associate a charge $\varphi(x)$ to each element  $x\in V(G)\cup
F(G)$ by $\varphi(x)=2d(x)-6$ if $x\in V(G)$, and
$\varphi(x)=d(x)-6$ if $x\in F(G)$.  We shall transfer the charge of
the vertices to the faces of $G$, in such a way that the total charge of
vertices and faces remains constant. The discharging rules are
defined as follows.

\medskip

($R_{1}$) Every  $6^+$-vertex $v$ of $G$ gives
$\frac{\varphi(v)}{d(v)}$ to each incident face.

\medskip

($R_{2}$) Let $v$ be a 4-vertex of $G$. If its neighbors  are
$8^{+}$-vertices, then $v$ gives $\frac{1}{2}$ to each incident
face. Otherwise, $v$ is adjacent to at most one vertex whose degree is at most seven, say $v_1$. Then $v$ gives $\frac{4}{5}$ to each each incident
face  containing $vv_{1}$ and $\frac{1}{5}$ to each the other
incident face.

\medskip

($R_{3}$) Let $v$ be a 5-vertex with neighbors $v_{1}, v_{2},\cdots,
v_{5}$ such that $d(v_{1})\leq\cdots\leq d(v_{5})$.

($R_{3.1}$) If $d(v_{1})\geq 7$, then $v$ gives $\frac{4}{5}$ to
each incident face.

($R_{3.2}$) If $d(v_{1})\leq 6$ and  $d(v_{2})\geq8$, then $v$ gives
$\frac{5}{4}$ to each incident face containing  $vv_{1}$ and
$\frac{1}{2}$ to each the other incident face.

($R_{3.3}$) If $d(v_{1})\leq 6$ and $d(v_{2})\leq7$, then
$d(v_{3})\geq9$. When the edge $vv_{1}$ and $vv_{2}$ belonging to a
same face $f$, then $v$ gives 1 to face $f$, $\frac{5}{6}$ to the
face  only containing $vv_{1}$ or $vv_{2}$, $\frac{2}{3}$ to each
remaining incident face. Otherwise, $v$ gives $\frac{13}{15}$ to
each face  containing  $vv_{1}$ or $vv_{2}$, $\frac{8}{15}$ to each
the other incident face.

By Euler's formula, we have
\begin{eqnarray}\label{type I}
 \sum_{v\in V(G)} \varphi(v)+\sum_{f\in F(G)} \varphi(f)= \sum_{v\in
 V(G)}(2d(v)-6)+\sum_{f\in F(G)}(d(f)-6)=-12.
\end{eqnarray}

After doing all possible charge transfers once, let $\varphi'(x)$ be
the resulting charge on each element $x\in V(G)\cup F(G)$. Then

\begin{eqnarray}\label{2}
 \sum_{x\in V(G)\cup F(G)} \varphi'(x) = \sum_{x\in V(G)\cup F(G)} \varphi(x)=-12.
\end{eqnarray}

Now we shall show $\varphi'(x)\geq 0$ for any $x\in
V(G)\cup F(G)$, a contradiction to (2).

Let $v$ be any vertex of $G$. Clearly, $\varphi'(v)=\varphi(v)=0$,
if $d(v)=3$, and
$\varphi'(v)=\varphi(v)-d(v)\times\frac{\varphi(v)}{d(v)}=0$, if
$d(v)\geq6$
 by $(R_1)$.

Suppose that  $v$ is a 4-vertex. Then $\varphi(v)=2$. If the
neighbors of $v$ are $8^{+}$-vertices, then
$\varphi'(v)=\varphi(v)-4\times \frac{1}{2}=0$; otherwise,
$\varphi'(v)=\varphi(v)-2\times \frac{4}{5}-2\times \frac{1}{5}=0$
by $(R_2)$.

Suppose that  $v$ is a 5-vertex. Then $\varphi(v)=4$. If
$d(v_{1})\geq7$, then
$\varphi'(v)\geq\varphi(v)-5\times\frac{4}{5}=0$ by $(R_{3.1})$. If
$d(v_{1})\leq 6$ and  $d(v_{2})\geq 8$, then
$\varphi'(v)\geq\varphi(v)-2\times\frac{5}{4}-3\times\frac{1}{2}=0$
by $(R_{3.2})$. Otherwise,  $d(v_{1})\leq 6$,  $d(v_{2})\leq7$ and
$d(v_{3})\geq9$, since $G$ does not contain the configuration $(\mathcal{A}_4)$. In this case, if $vv_{1}$ and $vv_{2}$ are incident
with  a same face, then
$\varphi'(v)\geq\varphi(v)-1-2\times\frac{5}{6}-2\times\frac{2}{3}=0$.
Otherwise,
$\varphi'(v)\geq\varphi(v)-4\times\frac{13}{15}-\frac{8}{15}=0$.

Let $f=[xyz]$ be any 3-face of $G$ with $d(x)\leq d(y)\leq d(z)$.
Then $\varphi(f)=-3$. If $d(x)\geq6$, then $f$ gets at least 1 from
each incident vertex,  $\varphi'(f)\geq\varphi(f)+3\times1=0$. If
$d(x)=6$, then $d(y)\ge 12$. This implies that $y$ (or $z$) gives at
least $\frac 3 2$ to $f$ by $(R_1)$, so
$\varphi'(f)\geq\varphi(f)+2\times\frac{3}{2}=0$.

Suppose that $d(x)=4$. Then $x$ gives at least $\frac 1 5$ to $f$.
We consider the degree of $y$. If $d(y)\ge 10$, then $\varphi'(f)\geq \varphi(f)+\frac{1}{5}+2\times\frac{7}{5}=0$.
If $8\le d(y)\le 9$, since $G$ does not contain the configuration $(\mathcal{A}_3)$, $v$ is not adjacent to a $7^-$-vertex. This implies
$x$ gives  $\frac 12$ to $f$ by $(R_2)$, so
$\varphi'(f)\geq \varphi(f)+\frac{1}{2}+2\times\frac{5}{4}=0$. Otherwise, $d(y)\leq 7$ and $d(z)\geq 10$. The vertex $x$ gives
$\frac 4 5$ to $f$ by $(R_2)$ and $z$ gives at least $\frac 7 5$ by $(R_1)$.  Moreover, $y$ gives at least $\frac 54$ to $f$ by $(R_1), (R_2)$ and $(R_3)$. Thus $\varphi'(f)\geq\varphi(f)+2\times\frac{4}{5}+\frac{7}{5}=0$.

Suppose that $d(x)=5$. Then $x$ gives at least $\frac 12$ to $f$. If $d(y)\ge 8$, then $\varphi'(f)\geq\varphi(f)+\frac{1}{2}+2\times \frac{5}{4}=0$ by $(R_1)$. If $d(y)=7$, then $x$ gives
$\frac {4}{5}$ to $f$ by $(R_3)$, so
$\varphi'(f)\geq\varphi(f)+\frac{4}{5}+2\times \frac{8}{7}>0$. Otherwise, $d(y)=5$ or 6. Note that $x$ (or $y$) gives at least $\frac 5 6$ to $f$ by $(R_1)$ and $(R_3)$. We consider the degree of $z$.

{\bf Case 1.}  $d(y)\geq 9$

Then
$\varphi'(f)\geq\varphi(f)+\frac{4}{3}+2\times\frac{5}{6}=0$.

{\bf Case 2.}  $d(y)=8$.

In this case, $x$ gives $\frac 54$ to $f$. Thus
$\varphi'(f)\geq\varphi(f)+\frac{5}{6}+2\times\frac{5}{4}>0$.

{\bf Case 3.}  $d(y)\le 7$.

In this case, $x$ sends 1 to $f$ by $(R_{3.3})$, $x$ (or $y)$ sends
at least $1$ to $f$ by $(R_1)$ and $(R_{3.3})$. Thus
$\varphi'(f)\geq\varphi(f)+3\times 1=0$.

In any case, we have $\varphi'(x)\geq 0$ for any $x\in V(G)\cup
F(G)$, a contradiction to (2). This completes the proof of Lemma
2.1.
\end{proof}



\section{Properties of acyclically edge $k$-critical graphs}

Let $\phi: E(G)\longrightarrow[k]$ be an edge $k$-coloring of $G$.
For a vertex $v\in V(G)$ and an edge $e=uv$, we say
that the color $\phi(e)$ $appears$ on $v$. Let
$C_{\phi}(v)=\{\phi(uv)|u\in N(v)\}$ and $F_{\phi}(uv)=\{\phi(vv')| v'\in N(v), v'\neq v\}$. Note that $F_{\phi}(uv)$ need not be the same as $F_{\phi}(vu)$.

Recall that a $multiset$ is a generalized set where a member can
appear multiple times. If an element $x$ appears $t$ times in a
multiset $\mathcal{S}$, then we say that the $multiplicity$ of $x$ in
$\mathcal{S}$ is $t$. In notation $mult_{\mathcal{S}}(x)=t$. The
cardinality of a finite multiset $\mathcal{S}$, denoted by
$||\mathcal{S}||$, is defined as $\sum_{x\in
\mathcal{S}}mult_{\mathcal{S}}(x)$. Let $\mathcal{S}_1$ and
$\mathcal{S}_2$ be two multisets. The $join$ of $\mathcal{S}_1$ and
$\mathcal{S}_2$, denoted as $\mathcal{S}_1\uplus \mathcal{S}_2$, is
the multiset that have all the members of $\mathcal{S}_1$ as well as
$\mathcal{S}_2$ such that for any $x\in \mathcal{S}_1\uplus
\mathcal{S}_2$, $mult_{\mathcal{S}_1\uplus
\mathcal{S}_2}(x)=mult_{\mathcal{S}_1}(x)+mult_{\mathcal{S}_2}(x)$.

Let $\alpha,\beta$ be two colors. An $(\alpha,\beta)$-$maximal$
$bichromatic$ $path$ with respect to $\phi$ is maximal path
consisting of edges that are colored $\alpha$ and $\beta$
alternatingly.  An $(\alpha,\beta,u,v)$-$critical$ $path$ is an $(\alpha,\beta)$-maximal bichromatic path which starts
at the vertex $u$ with an edge colored $\alpha$ and ends at $v$ with an edge colored $\alpha$.

A graph $G$ is called an $acyclically$ $edge$ $k$-$critical$ $graph$
if $\chi'_a(G)>k$ and any proper subgraph of $G$ is acyclically edge
$k$-colorable. Obviously, if $G$ is an acyclically edge $k$-critical
graph with $k>\Delta(G)$, then $\Delta(G)\ge 3$. The following facts
are obvious.

\begin{fact} \label{one-bichromatic-path}
Given a pair of color $\alpha$ and $\beta$ of a proper edge coloring
$\phi$ of $G$, there is at most one  $(\alpha,\beta)$-maximal
bichromatic path containing a particular vertex $v$, with respect to
$\phi$.
\end{fact}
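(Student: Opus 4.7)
The plan is to reduce everything to the properness of $\phi$, which forces the colors on edges at a single vertex to be distinct. First I would observe that $v$ is incident to at most one edge of color $\alpha$, call it $e_\alpha$ if it exists, and at most one edge of color $\beta$, call it $e_\beta$ if it exists. Any $(\alpha,\beta)$-maximal bichromatic path $P$ that contains $v$ must use, among the edges at $v$, only edges colored $\alpha$ or $\beta$, hence some subset of $\{e_\alpha,e_\beta\}$. So the local picture of $P$ at $v$ has very few possibilities.

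Next I would split on whether $v$ is internal to $P$ or an endpoint. If $v$ is internal, then $P$ uses both $e_\alpha$ and $e_\beta$ at $v$, and these two edges are forced by the previous paragraph. If $v$ is an endpoint, then $P$ uses exactly one of $e_\alpha,e_\beta$ at $v$, and which one is pinned down by maximality: the path cannot be extended past $v$, which (together with the uniqueness of $e_\alpha,e_\beta$) determines the starting edge. In either case, the collection of edges of $P$ meeting $v$ is uniquely determined by $\phi$.

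Finally I would propagate this uniqueness along $P$. Once the edge $e_\alpha$ at $v$ is committed to $P$, its other endpoint $u$ has at most one $\beta$-edge, and maximality forces $P$ to continue along it; then properness at the next vertex forces continuation along the unique $\alpha$-edge there, and so on. Maximality determines when to stop in each direction. A symmetric argument handles the branch through $e_\beta$. Therefore the whole path $P$ is uniquely reconstructed from $v$ and $\phi$, proving that at most one $(\alpha,\beta)$-maximal bichromatic path through $v$ exists.

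The only delicate point worth flagging is the bookkeeping between the ``internal vertex'' and ``endpoint'' cases, and the verification that maximality always yields a well-defined stopping rule in each direction; once those are set down, the argument is a direct unpacking of definitions, which is presumably why the paper records this as an obvious fact rather than as a lemma with a written proof.
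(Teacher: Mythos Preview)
Your proposal is correct, and you guessed right: the paper offers no proof at all, simply declaring the fact ``obvious.'' Your argument is the standard unpacking---the subgraph spanned by the two color classes has maximum degree at most $2$ by properness, so its components are paths and cycles, and each vertex lies in at most one component; your step-by-step propagation is just an explicit reconstruction of that component. The only caveat is that if the component through $v$ happened to be a bichromatic \emph{cycle}, uniqueness of the maximal path could fail; but in every application in the paper $\phi$ is an acyclic edge coloring of $G-uv$, so this situation never arises.
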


\begin{fact} \label{uv-edge-degree}
Let $G$ be an acyclically edge $k$-critical graph, and $uv$ be an
edge of $G$. Then for any acyclically edge $k$-coloring $\phi$ of
$G-uv$, if $C_{\phi}(u)\cap C_{\phi}(v)=\emptyset$, then $d(u)+d(v)\ge
k+2$. If $|C_{\phi}(u)\cap C_{\phi}(v)|=t$, say $\phi(uu_i)=\phi(vv_i)$
for $i=1,2,..,t$, then $\sum\limits_{i=1}^{t}d(v_i)+d(u)+d(v)\ge
k+t+2$ and $\sum\limits_{i=1}^{t}d(u_i)+d(u)+d(v)\ge k+t+2$.
\end{fact}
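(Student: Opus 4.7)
The plan is to argue by contradiction using the $k$-criticality of $G$: since $\phi$ is an acyclic $k$-coloring of $G-uv$ but $G$ has no acyclic $k$-coloring, $\phi$ admits no valid extension to $uv$. In particular, for every color $\alpha\in[k]$, either $\alpha$ already appears at $u$ or at $v$ (violating properness), or assigning $\phi(uv)=\alpha$ would close a bichromatic cycle through $uv$. The plan is to count, in each case, the colors that can be obstructed in these two ways, and then translate the non-extendability into the claimed degree inequality.

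For the first assertion, with $C_\phi(u)\cap C_\phi(v)=\emptyset$, I would first observe that a bichromatic $(\alpha,\beta)$-cycle through $uv$ after setting $\phi(uv)=\alpha$ must close at $u$ and at $v$ via $\beta$-edges, forcing $\beta\in C_\phi(u)\cap C_\phi(v)=\emptyset$, which is impossible. Hence the only obstruction is properness, so every $\alpha\notin C_\phi(u)\cup C_\phi(v)$ extends $\phi$ acyclically; since $|C_\phi(u)\cup C_\phi(v)|=d(u)+d(v)-2$, non-extendability forces $d(u)+d(v)-2\ge k$, which is the desired bound.

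For the second assertion, set $\alpha_i=\phi(uu_i)=\phi(vv_i)$, so $|C_\phi(u)\cup C_\phi(v)|=d(u)+d(v)-t-2$. For a color $\alpha\notin C_\phi(u)\cup C_\phi(v)$, any bichromatic cycle produced by setting $\phi(uv)=\alpha$ uses a second color $\beta\in\{\alpha_1,\ldots,\alpha_t\}$. Fixing $i$, since $vv_i$ is the unique $\alpha_i$-edge at $v$ and $uu_i$ is the unique $\alpha_i$-edge at $u$, such a cycle corresponds in $G-uv$ to an $(\alpha,\alpha_i)$-bichromatic path from $v_i$ to $u_i$ whose first edge at $v_i$ has color $\alpha$. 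This forces $\alpha\in C_\phi(v_i)\setminus\{\alpha_i\}$, a set of size at most $d(v_i)-1$, and by Fact~\ref{one-bichromatic-path} each such $\alpha$ yields at most one maximal bichromatic path through $v_i$.

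Summing over $i$, the colors outside $C_\phi(u)\cup C_\phi(v)$ that fail to extend $\phi$ number at most $\sum_{i=1}^t(d(v_i)-1)$. Combining this with the $d(u)+d(v)-t-2$ colors forbidden by properness and using non-extendability yields $d(u)+d(v)+\sum_{i=1}^t d(v_i)\ge k+2t+2$, which in particular implies the claimed bound $k+t+2$. The companion inequality involving the $u_i$'s follows from the symmetric argument, tracing the bichromatic path from $u_i$ instead of from $v_i$. The only delicate step is the cycle-structure observation in paragraph three—namely, verifying that every forbidden bichromatic cycle through $uv$ is an $(\alpha,\alpha_i)$-cycle that traverses exactly $vv_i$ and $uu_i$ for some $i$—which relies on $\alpha\notin C_\phi(u)\cup C_\phi(v)$ together with properness of $\phi$.
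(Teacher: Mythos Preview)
Your argument is correct; the paper itself does not prove Fact~\ref{uv-edge-degree}, merely stating that it is ``obvious,'' and what you wrote is exactly the standard counting argument behind it. As you note, your count actually yields the slightly sharper inequality $\sum_{i=1}^{t}d(v_i)+d(u)+d(v)\ge k+2t+2$, which of course implies the stated bound.
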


In \cite{hou2011}, Hou et al.  considered the properties of
acyclically edge $k$-critical graphs and got the following lemmas.

\begin{lemma} \cite{hou2011}\label{2-connected}
Any acyclically edge $k$-critical graph  is $2$-connected.
\end{lemma}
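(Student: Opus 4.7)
The plan is to proceed by contradiction: assume $G$ is acyclically edge $k$-critical but not $2$-connected, so that either $G$ is disconnected, or $G$ is connected with a cut vertex. I would dispatch the two cases separately and in each case extract an acyclic edge $k$-coloring of $G$ itself, violating $\chi_a'(G)>k$.

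If $G$ is disconnected, let $G_1,\ldots,G_m$ ($m\ge 2$) be its components. Each $G_i$ is a proper subgraph, hence admits an acyclic edge $k$-coloring $\phi_i$ by criticality. Define $\phi$ on $E(G)$ to be $\phi_i$ on each $E(G_i)$. This is clearly proper, and since any cycle of $G$ lies entirely in a single component, any bichromatic cycle of $G$ would give a bichromatic cycle of some $G_i$, contradicting acyclicity of $\phi_i$.

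The main case is when $G$ is connected with a cut vertex $v$. Decompose $G=G_1\cup G_2$ with $V(G_1)\cap V(G_2)=\{v\}$, $E(G_1)\cap E(G_2)=\emptyset$, and both $G_i$ nonempty; each $G_i$ is then a proper subgraph, so criticality yields acyclic edge $k$-colorings $\phi_1,\phi_2$ of $G_1,G_2$. Since relabeling the palette preserves the acyclic property, I would apply a permutation $\pi$ of $[k]$ to $\phi_2$ so that the color set of $\pi\circ\phi_2$ at $v$ is disjoint from the color set of $\phi_1$ at $v$; such a $\pi$ exists because $d_G(v)=d_{G_1}(v)+d_{G_2}(v)\le\Delta(G)\le k$ (the last inequality being standard in this critical setting, since otherwise even a proper $k$-edge coloring of $G-e$ for some $e$ would fail). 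Merging $\phi_1$ with $\pi\circ\phi_2$ gives a proper edge $k$-coloring of $G$. To check acyclicity, note that any cycle of $G$ meets $v$ at most once, so cannot use edges of both $G_1$ and $G_2$; thus any bichromatic cycle in $G$ would be contained in some $G_i$, again contradicting acyclicity of $\phi_i$.

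The step I expect to be the main obstacle is Step $3$, namely justifying that a color permutation at $v$ is possible: everything rests on having enough colors to keep the two local palettes disjoint, which in turn rests on $d(v)\le k$. Once $\Delta(G)\le k$ is noted (from the fact that $G-e$ must itself admit a proper $k$-edge coloring for every edge $e$), the permutation argument is immediate and the rest of the proof is a routine gluing.
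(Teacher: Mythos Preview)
The paper does not prove this lemma; it is quoted from \cite{hou2011}. So there is no in-paper argument to compare against, and your block-decomposition proof is exactly the standard one that is normally given for such statements.

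Your argument is correct once $\Delta(G)\le k$ is available, but your justification of that inequality has a gap, and in fact the lemma as literally stated (with no hypothesis relating $k$ and $\Delta(G)$) is false. The star $K_{1,k+1}$ satisfies $\chi'_a(K_{1,k+1})=k+1>k$, and every proper subgraph of it has acyclic chromatic index at most $k$; hence $K_{1,k+1}$ is acyclically edge $k$-critical, yet its center is a cut vertex. Your sentence ``otherwise even a proper $k$-edge coloring of $G-e$ for some $e$ would fail'' does not cover this example, because every edge of a star is incident with the unique high-degree vertex, so deleting any edge drops that degree to $k$. In the setting of this paper (and of \cite{hou2011}) one always has $k\ge\Delta(G)$---indeed $k=\Delta(G)+10$ here---so $\Delta(G)\le k$ is part of the standing hypotheses rather than something to be derived; with that assumption in place your permutation-and-glue argument is complete. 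I would simply add the hypothesis $k\ge\Delta(G)$ to the statement (matching Lemmas~\ref{neighbor of 2-vertex}--\ref{uv-edge-colored-2}) and drop the parenthetical attempt to deduce it.
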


\begin{lemma} \cite{hou2011}\label{lem3vertex}
Let $G$ be an acyclically edge $k$-critical graph with $k\ge
\Delta(G)+2$ and $v$ be a $3$-vertex of $G$. Then the neighbors of
$v$ are $(k-\Delta(G)+2)^+$-vertices.
\end{lemma}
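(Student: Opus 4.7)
The plan is to argue by contradiction. Suppose some neighbor $u$ of $v$ satisfies $d(u)\le k-\Delta+1$; relabel so that $u=v_1$ and the other neighbors of $v$ are $v_2,v_3$. Since $G$ is acyclically edge $k$-critical, fix an acyclic edge $k$-coloring $\phi$ of $G-vv_1$. I aim to extend $\phi$ to an acyclic edge $k$-coloring of $G$, contradicting $\chi_a'(G)>k$.

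Write $\alpha_i=\phi(vv_i)$ for $i=2,3$ and set $T=C_\phi(v_1)\cap\{\alpha_2,\alpha_3\}$, $t=|T|$. A color $c$ is forbidden for $vv_1$ either for properness (it lies in $C_\phi(v_1)\cup\{\alpha_2,\alpha_3\}$, of size $d(v_1)+1-t$) or because assigning it would close a bichromatic cycle. Any such cycle must be a $(c,c')$-cycle with $c'\in T$, built from the maximal $(c,c')$-bichromatic path through $v_1$ that starts at the $c'$-colored neighbor $w(c')$ of $v_1$ and ends at $v$ along its $c'$-edge; Fact \ref{one-bichromatic-path} says this path is unique, and consequently $c\in C_\phi(w(c'))\setminus\{c'\}$, so the cycle-forbidden set has size at most $t(\Delta-1)$.

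When $t=0$ the cycle-forbidden set is empty, and at most $d(v_1)+1\le k-\Delta+2\le k-1$ colors are forbidden (using $\Delta\ge 3$, which holds because $v$ is a $3$-vertex of $G$); a valid extension exists. When $t\ge 1$ the naive count is off by up to $t(\Delta-2)-1$, so I would first modify $\phi$ before extending: for each $c'\in T$, recolor the edge $v_1w(c')$ by a new color $\gamma\neq c'$ so that $c'$ disappears from $C_\phi(v_1)$, reducing $t$ until the $t=0$ regime is reached. The recoloring color $\gamma$ must avoid the proper-conflict set $\{c'\}\cup(C_\phi(v_1)\setminus\{c'\})\cup(C_\phi(w(c'))\setminus\{c'\})$ of size at most $d(v_1)+d(w(c'))-2\le k-1$, and must not create new bichromatic cycles through $v_1w(c')$. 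Fact \ref{one-bichromatic-path} again limits the bad $\gamma$'s, while Fact \ref{uv-edge-degree} supplies the lower bounds on $d(w(c'))$ and on $d(v_i)$ needed to close the accounting in the tight boundary subcases.

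The main obstacle is the recoloring step when $t\ge 1$: verifying that a valid $\gamma$ exists and does not spawn new bichromatic cycles demands a careful analysis of the maximal bichromatic paths emanating from $v_1$, $w(\alpha_2)$, and $w(\alpha_3)$, and of how they interact with the $(c,c')$-paths already considered. In the most delicate subcase both $\alpha_2,\alpha_3\in C_\phi(v_1)$, and the recoloring may need to be iterated without reintroducing an overlap at $v_1$; this iterative bookkeeping, combined with repeated appeals to Facts \ref{one-bichromatic-path} and \ref{uv-edge-degree}, is where the bulk of the technical work lies.
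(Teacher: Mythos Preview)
The paper does not prove Lemma~\ref{lem3vertex} at all: it is simply quoted from \cite{hou2011} and used as a black box, so there is no ``paper's own proof'' to compare your proposal against.

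As for your outline on its own merits, the $t=0$ case is fine, but what you write for $t\ge 1$ is not yet a proof---you explicitly defer the recoloring argument as ``the bulk of the technical work'' without carrying it out. Note also that your proposed move (recolor $v_1w(c')$ to eliminate the overlap at $v_1$) is not the route taken in the analogous lemmas the paper \emph{does} prove. In Lemmas~\ref{neighbor of 2-vertex} and~\ref{number of 2-vertex} the recoloring happens on the \emph{other} side: one first shows that in the tight case the relevant $F_\phi$-sets at the critical-path endpoints are completely determined, and then swaps colors on two edges incident to the low-degree vertex (here $v$, not $v_1$) to break all the critical paths simultaneously via Fact~\ref{one-bichromatic-path}. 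That pattern is cleaner than iteratively recoloring at $v_1$, because it avoids the issue you flag of a recoloring step reintroducing overlaps. If you want to reconstruct a proof in the spirit of this paper, mimicking the structure of the proof of Lemma~\ref{neighbor of 2-vertex} is the natural approach.
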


In this section, we give some properties of acyclically edge
critical graphs, which will be used in the proof of Theorem
\ref{main-result}.

\begin{lemma}  \label{neighbor of 2-vertex}
Let $v$ be a $2$-vertex of   an acyclically edge $k$-critical graph
$G$ with $k>\Delta(G)$. Then the neighbors of $v$ are
$(k-\Delta(G)+3)^+$-vertices.
\end{lemma}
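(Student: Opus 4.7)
The plan is to proceed by contradiction: assume that some neighbor $u_1$ of $v$ has $d(u_1)\le k-\Delta(G)+2$. By the $k$-criticality of $G$, the subgraph $G-vu_1$ admits an acyclic edge $k$-coloring $\phi$; write $\alpha=\phi(vu_2)$. The decisive new ingredient (compared with Lemma \ref{lem3vertex}) is that $v$ becomes pendant in $G-vu_1$, so $vu_2$ lies on no cycle of $G-vu_1$ and may be recolored to any $\gamma\notin C_\phi(u_2)\setminus\{\alpha\}$ without disturbing acyclicity. This free recoloring of $vu_2$ is exactly what should upgrade the 3-vertex bound $k-\Delta+2$ to the present 2-vertex bound $k-\Delta+3$.

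I would first dispose of the easy case $\alpha\notin C_\phi(u_1)$: any $\beta\in[k]\setminus(C_\phi(u_1)\cup\{\alpha\})$, a set of size at least $k-d(u_1)\ge\Delta(G)-2\ge 1$, extends $\phi$ acyclically to $G$, since a bichromatic cycle through the new edge $vu_1$ would require an $\alpha$-edge at $u_1$. When $\alpha\in C_\phi(u_1)$, write $\phi(u_1u_1')=\alpha$ and try to pick $\gamma\in[k]\setminus(C_\phi(u_1)\cup(C_\phi(u_2)\setminus\{\alpha\}))$; recoloring $vu_2$ to such a $\gamma$ reduces us to the easy case. This choice fails only when $C_\phi(u_1)\cup(C_\phi(u_2)\setminus\{\alpha\})=[k]$, which, together with $|C_\phi(u_1)|\le d(u_1)-1$ and $|C_\phi(u_2)\setminus\{\alpha\}|\le d(u_2)-1$, pins us to the extremal configuration $d(u_1)=k-\Delta(G)+2$, $d(u_2)=\Delta(G)$, $C_\phi(u_1)\cap C_\phi(u_2)=\{\alpha\}$.

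In the extremal configuration I would keep $\alpha$ on $vu_2$ and color $vu_1$ directly. The available pool is exactly $[k]\setminus C_\phi(u_1)=C_\phi(u_2)\setminus\{\alpha\}$, of size $\Delta(G)-1$, and Fact \ref{one-bichromatic-path} forces each bad color $\beta$ into $C_\phi(u_1')\setminus\{\alpha\}$, giving at most $d(u_1')-1$ bad colors. Applying Fact \ref{uv-edge-degree} to the removed edge $vu_1$ (intersection $\{\alpha\}$, so $t=1$) yields $d(u_1)+d(u_1')\ge k+1$, hence $d(u_1')\ge\Delta(G)-1$; as soon as $d(u_1')\le\Delta(G)-1$ the strict inequality $\Delta(G)-2<\Delta(G)-1$ produces an extendable color. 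The main obstacle is therefore the residual tight sub-case $d(u_1')=\Delta(G)$ with $C_\phi(u_1')=C_\phi(u_2)$ and every available color actually bad. To break this rigidity I would iterate the recoloring: for each $\delta\in C_\phi(u_1)\setminus\{\alpha\}$ (valid because $\delta\notin C_\phi(u_2)$), recolor $vu_2$ to $\delta$ and rerun the bad-color analysis with $u_1^\delta$ in place of $u_1'$. Pooling the resulting constraints over all such $\delta$ forces $C_\phi(u_1)\cup\{\beta\}\subseteq C_\phi(u_2^\beta)$ at the $\beta$-neighbor $u_2^\beta$ of $u_2$ for each bad $\beta$, hence $d(u_2^\beta)\ge k-\Delta(G)+2$; a further application of Fact \ref{uv-edge-degree} to an appropriate edge at $u_2^\beta$ then over-constrains the local degree sum and delivers the desired contradiction.
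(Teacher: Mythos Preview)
Your argument is correct and essentially matches the paper through the reduction to the extremal configuration $d(u_1)=k-\Delta+2$, $d(u_2)=\Delta$, $C_\phi(u_1)\cap C_\phi(u_2)=\{\alpha\}$, $C_\phi(u_1')=C_\phi(u_2)$. Your iteration over $\delta\in C_\phi(u_1)\setminus\{\alpha\}$ is also sound and yields, for each such $\delta$, that $C_\phi(u_1^\delta)=([k]\setminus C_\phi(u_1))\cup\{\delta\}$ and that $C_\phi(u_1)\cup\{\beta\}\subseteq C_\phi(u_2^\beta)$ for every $\beta\in C_\phi(u_2)\setminus\{\alpha\}$.

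The final sentence, however, is a genuine gap. Fact~\ref{uv-edge-degree} applies only to a \emph{deleted} edge of the critical graph under a given acyclic coloring of the complement; the sole deleted edge here is $vu_1$, and you have already extracted all it gives (namely $d(u_1')\ge\Delta-1$). There is no ``appropriate edge at $u_2^\beta$'' to which Fact~\ref{uv-edge-degree} applies under $\phi$, and the bound $d(u_2^\beta)\ge k-\Delta+2$ contradicts nothing. The paper closes the argument not by further degree counting but by a \emph{color swap} at the low-degree vertex: keep $\phi(vu_2)=\delta$, exchange the colors on $u_1u_1'$ and $u_1u_1^{\delta}$ (proper because $\delta\notin C_\phi(u_1')$ and $\alpha\notin C_\phi(u_1^{\delta})$, both of which you have established), and color $vu_1$ with any $i\in[k]\setminus C_\phi(u_1)$. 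By Fact~\ref{one-bichromatic-path} the old $(\delta,i)$-critical path from $u_2$ still terminates at $u_1^{\delta}$, but the edge $u_1u_1^{\delta}$ now carries $\alpha$, so the path no longer reaches $u_1$ and no bichromatic cycle arises. Your accumulated information is already more than enough to perform this swap; what is missing is the swap itself.
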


\begin{proof} Suppose to the contrary that $v$ has a neighbor $u$
whose degree is at most $k-\Delta(G)+2$. Let $N(v)=\{u,w\}$ and
$N(u)=\{v, u_1,u_2,\ldots,u_t\}$, where $t\le k-\Delta(G)+1$. Then
the graph $G'=G-uv$ admits an acyclic edge $k$-coloring $\phi$ by
the choice of $G$ with $\phi(uu_i)=i$ for $1\le i\le t$. Since
$d(u)+d(v)\le \Delta(G)+2$, we have $\phi(wv)\in C_{\phi}(u)$ by Fact
\ref{uv-edge-degree}, say $\phi(wv)=1$.   Then for any $t+1\le i\le
k$, there is a $(1,i,u,v)$-critical path
through $u_1$ and $w$ with respect to $\phi$, since otherwise we can color $uv$ with $i$
properly with avoiding bichromatic cycle, a contradiction
to the choice of $G$. Thus $\Delta(G)\geq d(u_1)\ge k-t+1$. This implies that
$t=k-\Delta(G)+1$ and $C_{\phi}(u_{1})=C_{\phi}(w)=\{1,k-\Delta(G)+2,\ldots,k\}$.
Recolor $wv$ with 2, by the same argument, there is a
$(2,i,u,v)$-critical path  through $u_2$ and $w$ for any $t+1\le
i\le k$. Now exchange the colors on $uu_1$ and $uu_2$, and color
$uv$ with $t+1$. The resulting coloring is an acyclic edge coloring
of $G$ using $k$ colors by Fact \ref{one-bichromatic-path}, a
contradiction.
\end{proof}

\begin{lemma}  \label{number of 2-vertex}
Let $v$ be a $t$-vertex of  an acyclically edge $k$-critical graph
$G$ with $k>\Delta(G)$ and $t\ge k-\Delta(G)+2$. Then $n_2(v)\le t+\Delta(G)-k-2$.
\end{lemma}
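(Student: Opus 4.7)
The plan is to suppose for contradiction that $n_2(v)\ge t+\Delta(G)-k-1$ and to construct an acyclic edge $k$-coloring of $G$. Enumerate the neighbors of $v$ so that $u_1,\ldots,u_s$ are its $2$-vertex neighbors (with $s=n_2(v)$) and $u_{s+1},\ldots,u_t$ are its $3^+$-vertex neighbors; let $w$ denote the second neighbor of $u:=u_1$. By the criticality of $G$ there is an acyclic edge $k$-coloring $\phi$ of $G-uv$, and after relabelling colors we may assume $\phi(vu_j)=j$ for $j=2,\ldots,t$.

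I would then combine Fact \ref{uv-edge-degree} with the standard ``critical-path'' technique. Since $d(u)+d(v)=t+2\le\Delta(G)+2<k+2$, Fact \ref{uv-edge-degree} forces $C_\phi(u)\cap C_\phi(v)\neq\emptyset$, so $\phi(uw)=i$ for some $i\in\{2,\ldots,t\}$. For each available color $\beta\in\{1\}\cup\{t+1,\ldots,k\}$ the extension $\phi(uv)=\beta$ is proper; by criticality it must therefore create a bichromatic cycle, and the only such cycle uses $uw$, $vu_i$, and a bichromatic $(i,\beta)$-maximal path from $u_i$ to $w$. Hence $\{1\}\cup\{t+1,\ldots,k\}\subseteq C_\phi(u_i)\cap C_\phi(w)$, so $d(u_i),d(w)\ge k-t+2\ge 3$. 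In particular $u_i$ is a $3^+$-neighbor of $v$, i.e.\ $i\in\{s+1,\ldots,t\}$.

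The crucial observation is that $u$ has degree one in $G-uv$, so \emph{any} recoloring of the single edge $uw$ automatically keeps the coloring acyclic (no bichromatic cycle can pass through a degree-one vertex), and the only remaining constraint is properness at $w$. Hence if some $j\in\{2,\ldots,s\}\setminus C_\phi(w)$ exists, recoloring $uw$ to $j$ yields an acyclic edge $k$-coloring $\phi'$ of $G-uv$ with $\phi'(uw)=j=\phi'(vu_j)$. Rerunning the critical-path argument in $\phi'$ gives $d(u_j)\ge k-t+2\ge 3$, contradicting the assumption that $u_j$ is a $2$-vertex.

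It therefore suffices to show $\{2,\ldots,s\}\setminus C_\phi(w)\neq\emptyset$. Since $\{i,1,t+1,\ldots,k\}\subseteq C_\phi(w)$ contributes $k-t+2$ colors disjoint from $\{2,\ldots,s\}$ (using $i>s$), a direct count gives
\[
\bigl|\{2,\ldots,s\}\setminus C_\phi(w)\bigr|\ \ge\ (s-1)-\bigl(d(w)-(k-t+2)\bigr)\ \ge\ \Delta(G)-d(w),
\]
which is positive whenever $d(w)<\Delta(G)$. The main obstacle is the extremal case $d(w)=\Delta(G)$ and $s=t+\Delta(G)-k-1$ exactly, where $C_\phi(w)$ is rigidly equal to $\{1,2,\ldots,s,i,t+1,\ldots,k\}$. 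I would handle this case by either (i) replacing $u=u_1$ with a different $2$-neighbor $u_\ell$ and redoing the analysis with a fresh acyclic coloring of $G-u_\ell v$---the tight configuration is coloring-specific and unlikely to persist across all choices---or (ii) first recoloring $uw$ to some $j'\in\{s+1,\ldots,t\}\setminus\{i\}$ (such a $j'$ lies outside $C_\phi(w)$), and then following up with a Kempe-type swap to bring a $2$-neighbor's color onto $uw$. Verifying that one of these remedies always succeeds is the subtle technical step I expect to require the most care.
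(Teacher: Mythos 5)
Your set-up coincides with the paper's: the paper also deletes the edge joining $v$ to a $2$-neighbor (its $v_t$, your $u$), uses Fact \ref{uv-edge-degree} and critical paths to force the pendant vertex's color onto a $3^+$-neighbor of $v$, and exploits exactly your ``crucial observation'' that the pendant edge ($u_tv_t$ there, $uw$ here) can be recolored freely up to properness at $w$. Your counting argument in the non-extremal case ($d(w)<\Delta(G)$, or $s$ strictly above the bound) is correct and clean. The problem is that the case you set aside --- $d(w)=\Delta(G)$, $s=t+\Delta(G)-k-1$, and $C_\phi(w)=\{1,\ldots,s,i\}\cup\{t+1,\ldots,k\}$ --- is not a fringe technicality: it is exactly the configuration the paper's proof is about (there it appears as $m=k-\Delta(G)+1$, $d(u_t)=\Delta(G)$, $C_\phi(u_t)=\{1\}\cup\{m+1,\ldots,k\}$), and everything after that point in the paper is devoted to killing it. So the statement is not proved by your proposal.

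Concretely, neither of your two remedies is an argument. For (i), nothing rules out that every $2$-neighbor of $v$ has a $\Delta$-vertex as its second neighbor with the rigid color set in every acyclic coloring of the corresponding $G-u_\ell v$; ``unlikely to persist'' is precisely the assertion that has to be proved, and the paper does not obtain it by switching to another $2$-neighbor. For (ii), recoloring $uw$ with a $3^+$-neighbor color $j'$ is indeed how the paper proceeds (it recolors $u_tv_t$ with the colors of $vv_1$ and $vv_2$), but the ``Kempe-type swap'' you leave unspecified is the actual content: the paper first proves a separate Claim that for every $2$-neighbor color $i$ there is a $(1,i,v,v_t)$-critical path through $v_1$ and $u_t$ (proved by coloring $vv_t$ with $i$, uncoloring $vv_i$, and invoking Fact \ref{uv-edge-degree} on the new uncolored edge), then repeats the whole analysis after recoloring the pendant edge with $2$ to force $d(v_1)=d(v_2)=\Delta(G)$ and $F_\phi(vv_1)=F_\phi(vv_2)=\{m+1,\ldots,k\}$, and only then exchanges the colors on $vv_1$ and $vv_2$ and colors $vv_t$ with $t$, certifying acyclicity through Fact \ref{one-bichromatic-path}. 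That chain of forced structure and the final exchange at $v$ (not at $w$) is what your sketch is missing, so there is a genuine gap at the decisive step.
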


\begin{proof} If $n_2(v)=0$, we are done. Otherwise, $t\ge k-\Delta(G)+3$ by Lemma
\ref{neighbor of 2-vertex}.  By contradiction, suppose that $v$ has
neighbors $v_1,v_2,\cdots,v_t$ with $d(v_i)\ge 3$ for any $1\le i\le
m$, where $m\le k-\Delta(G)+1$, and $N(v_i)=\{v, u_i\}$ for any
$m+1\le i\le t$. The graph $G'=G-vv_t$ admits an acyclic edge
$k$-coloring $\phi$ by the choice of $G$ with $\phi(vv_i)=i$ for
$1\le i\le t-1$.  Since $d(v)+d(v_t)\le \Delta(G)+2$,
$\phi(u_tv_t)\in \{1,2,\cdots,m\}$ by Fact \ref{uv-edge-degree}, say
$\phi(u_tv_t)=1$. Then for any color $i$ with $t\le i\le k$, there
is a $(1,i,v,v_t)$-critical path through $v_1$ and $u_t$.  Note that for any $i$ with $m+1\le i\le t-1$, the
color $i$ should appear on $u_t$, since otherwise, we can recolor
$u_tv_t$ with $i$ and color $vv_t$ with a color appearing neither on
$v$ nor on $v_i$. Thus $\{m+1,\cdots,k\}\subseteq
C_{\phi}(u_t)$. Then $\Delta(G)\geq d(u_{t})\geq k-m+1$. This
implies that $m=k-\Delta(G)+1$ and $d(u_t)=\Delta(G)$. For any $1\le
i\le m$, and $t\le j\le k$,  there is a $(i,j,v,v_t)$-critical path through $v_i$ and
$u_t$ if we recolor $v_tv_t$ with $i$, since
otherwise, we can recolor $u_tv_t$ with $i$ and color $vv_t$ with
$j$.

{\bf Claim.} For any $m+1\le i\le t-1$, there is a $(1,i,v,v_t)$-critical path
through $v_1$ and $u_t$ with respect to $\phi$.

\begin{proof}
Otherwise, suppose that there exists a color $i_0$ with $m+1\le i_0\le
t-1$, such that there is not a $(1,i_0,v,v_t)$-critical path through $v_1$ and $u_t$. Color $vv_t$ with $i_0$, uncolor $vv_{i_0}$, the
colors on the other edges unchange,  we get an acyclic edge coloring
$\varphi$ of $G-vv_{i_0}$. It follows from Fact \ref{uv-edge-degree} that $\varphi(u_{i_0}v_{i_0})\in \{1,2,\cdots,m\}$.   Coloring
$vv_{i_{0}}$ with $t$ results in an acyclic edge coloring of $G$, a contradiction. \end{proof}

If we recolor $u_tv_t$ with 2, as in the proof above, there is a $(2,i,v,v_t)$-critical path
through $v_2$ and $u_t$ for any $m+1\le i\le t-1$. Thus $d(v_i)=\Delta(G)$ and $F_{\phi}(vv_i)=\{m+1, \cdots,k\}$ for $i=1, 2$. We exchange the colors on $vv_1$ and $vv_2$, and color $vv_t$ with $t$. The resulting coloring is an acyclic edge coloring of $G$ by Fact \ref{one-bichromatic-path}, a contradiction.
\end{proof}

\begin{lemma} \label{uv-edge-colored}
Let $uv$ be an edge of an acyclically edge $k$-critical graph $G$
with $k>\Delta(G)$. If $d(u)+d(v)\leq k-\Delta(G)+4$, then for any
acyclic edge $k$-coloring $\phi$ of $G-uv$, $|C_{\phi}(v)\cap
C_{\phi}(u)|\geq2$.
\end{lemma}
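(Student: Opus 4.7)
The plan is to argue by contradiction. Suppose $\phi$ is an acyclic edge $k$-coloring of $G-uv$ with $|C_\phi(u)\cap C_\phi(v)|\le 1$. The case $|C_\phi(u)\cap C_\phi(v)|=0$ is immediate from Fact \ref{uv-edge-degree}, which would require $d(u)+d(v)\ge k+2$, contradicting $d(u)+d(v)\le k-\Delta+4\le k+1$ (since $\Delta\ge 3$). So assume $C_\phi(u)\cap C_\phi(v)=\{1\}$ with $\phi(uu_1)=\phi(vv_1)=1$; write $c_i=\phi(uu_i)$, $d_j=\phi(vv_j)$, and let $M=[k]\setminus(C_\phi(u)\cup C_\phi(v))$, so $|M|=k-d(u)-d(v)+3\ge \Delta-1$.

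The first structural step uses the criticality of $G$: for every $c\in M$ the attempt to color $uv$ with $c$ must fail by producing a bichromatic cycle, and because $1$ is the only common color this cycle is of type $(1,c)$ and supplies a $(1,c)$-maximal bichromatic path from $u_1$ to $v_1$ in $\phi$; hence $c\in C_\phi(u_1)\cap C_\phi(v_1)$. Combining with $|C_\phi(u_1)|\le d(u_1)\le \Delta$ forces equality $|M|=\Delta-1$, $d(u_1)=d(v_1)=\Delta$, $C_\phi(u_1)=C_\phi(v_1)=M\cup\{1\}$, and $d(u)+d(v)=k-\Delta+4$. Moreover $d(u),d(v)\ge 3$: if $d(u)=2$, Lemma \ref{neighbor of 2-vertex} would force $d(v)\ge k-\Delta+3$, contradicting $d(v)=k-\Delta+2$.

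I would then bootstrap further structure: for each $j\ge 2$, recoloring $uu_1$ from $1$ to $d_j$ yields a proper acyclic coloring (properness is from $d_j\notin M\cup\{1\}=C_\phi(u_1)$ and $d_j\notin C_\phi(u)$; acyclicity follows since any new bichromatic cycle through $uu_1$ would demand a color in $(C_\phi(u)-\{1\})\cap M=\emptyset$). Rerunning the missing-color analysis on this new coloring, whose only common color is $d_j$, forces $C_\phi(v_j)=M\cup\{d_j\}$ and $d(v_j)=\Delta$. Symmetrically $C_\phi(u_i)=M\cup\{c_i\}$ and $d(u_i)=\Delta$ for each $i\ge 2$.

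The endgame is to recolor $uu_2$ from $c_2$ to $d_2$, obtaining an acyclic coloring $\phi''$ of $G-uv$ with $C_{\phi''}(u)\cap C_{\phi''}(v)=\{1,d_2\}$ and $c_2$ now missing. Properness at $u_2$ uses $d_2\notin C_\phi(u_2)=M\cup\{c_2\}$, and acyclicity again uses $C_\phi(u)\cap M=\emptyset$. I would then color $uv$ with $c_2$: the only candidate bichromatic cycles are of type $(1,c_2)$ and $(d_2,c_2)$, and both are blocked because $c_2\notin C_\phi(u_1)=M\cup\{1\}$ and $c_2\notin C_{\phi''}(u_2)$ (the latter is automatic, since $uu_2$ was the unique $c_2$-edge at $u_2$). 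This produces an acyclic edge $k$-coloring of $G$, contradicting $\chi'_a(G)>k$. The main technical hurdle is the acyclicity check at each recoloring stage, which is tamed throughout by the recurring identity $C_\phi(u)\cap M=C_\phi(v)\cap M=\emptyset$.
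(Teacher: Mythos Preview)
Your argument is correct, but it takes a different route from the paper's proof. Both start the same way: reduce to $|C_\phi(u)\cap C_\phi(v)|=1$, force $|M|=\Delta-1$ and $C_\phi(u_1)=C_\phi(v_1)=M\cup\{1\}$. From there the paper is more economical. It only needs one of the two endpoints to have degree $\ge 3$ (which follows immediately from $d(u)+d(v)=k-\Delta+4\ge 5$, without invoking Lemma~\ref{neighbor of 2-vertex}), recolors a single edge $vv_1\mapsto \phi(uu_2)$ to learn $F_\phi(uu_2)=M$, then swaps the colors on $uu_1$ and $uu_2$ and assigns $uv$ a color from $M$, finishing via Fact~\ref{one-bichromatic-path}. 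Your version instead requires \emph{both} $d(u),d(v)\ge 3$ (correctly obtained from Lemma~\ref{neighbor of 2-vertex}), then bootstraps the full structure $C_\phi(u_i)=M\cup\{c_i\}$ and $C_\phi(v_j)=M\cup\{d_j\}$ for all $i,j\ge 2$, and closes with the move $uu_2\mapsto d_2$, $uv\mapsto c_2$. What you gain is that the final acyclicity check is completely explicit ($c_2\notin M\cup\{1\}$ and $c_2\notin M\cup\{d_2\}$), with no appeal to path-uniqueness; what you pay is the extra dependency on Lemma~\ref{neighbor of 2-vertex} and some unused structure (you only ever use $i=j=2$).
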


\begin{proof}
By contradiction, suppose that there exists an acyclic edge
$k$-coloring $\phi$ of $G-uv$ such that $|C_{\phi}(v)\cap
C_{\phi}(u)|\leq 1$. It follows from Fact \ref{uv-edge-degree} that
$|C_{\phi}(v)\cap C_{\phi}(u)|=1$, say $\phi(uu_1)=\phi(vv_1)=1$. Let $T=[k]\setminus {C_{\phi}(v)\cup C_{\phi}(u)}$.
Then $|T|=k-|C_{\phi}(v)\cup C_{\phi}(u)|\ge
k-(k-\Delta(G)+1)=\Delta(G)-1$. Moreover, for any color $i\in T$, there is
a $(1,i,u,v)$-critical path through $u_1$ and $v_1$. This implies
that $|T|=\Delta(G)-1$, $F_{\phi}(uu_1)=F_{\phi}(vv_1)=T$, and
$d(u)+d(v)=k-\Delta(G)+4\ge 5$. We may assume that
$d(u)\ge 3$. Let $u_2$ be another neighbor of $u$ different from
$u_1$ and assume that $\phi(uu_2)=2$. Then $2\notin C_{\phi}(v)$.
Recoloring $vv_1$ with 2 results in another acyclic edge
$k$-coloring $\phi'$ of $G-uv$. Similarly, there is a
$(2,i,u,v)$-critical path through $u_2$ and $v_1$ with respect to
$\phi'$ for any $i\in T$. Thus $F_{\phi}(uu_2)=T$. Then we exchange
the colors on $uu_1$ and $uu_2$, and color $uv$ with a color in $T$.
By Fact \ref{one-bichromatic-path}, the resulting coloring is an
acyclic edge coloring of $G$, a contradiction.
\end{proof}

\begin{lemma} \label{uv-edge-colored-2}
Let $v$ be $3^+$-vertex of an acyclically edge $k$-critical graph
$G$ with neighbors $u, v_1,v_2,\cdots,v_t$, where  $k>\Delta(G)$,
and $\phi$ be an acyclic edge $k$-coloring of $G-uv$. If
$d(u)+d(v)\leq k-\Delta(G)+4$ and $|C_{\phi}(v)\cap C_{\phi}(u)|=2$,
then the multiset $\mathcal{S}=F_{\phi}(vv_1)\uplus
F_{\phi}(vv_2)\uplus \cdots F_{\phi}(vv_t)$ contains at least $d(u)$
colors from $C_{\phi}(u)\cup C_{\phi}(v)$.
\end{lemma}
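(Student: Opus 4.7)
I would argue by contradiction: assume that the multiset $\mathcal{S}$ contains strictly fewer than $d(u)$ occurrences of colors from $A:=C_\phi(u)\cup C_\phi(v)$, and then build an acyclic edge $k$-coloring of $G$ by modifying $\phi$, contradicting the fact that $G$ is acyclically edge $k$-critical. The first step is to fix notation: write $\phi(uu_i)=c_i$ for $i=1,\ldots,d(u)-1$ and $\phi(vv_j)=\gamma_j$ for $j=1,\ldots,t$, and relabel so that $\{\alpha,\beta\}=C_\phi(u)\cap C_\phi(v)$ with $c_1=\gamma_1=\alpha$ and $c_2=\gamma_2=\beta$. Then $|A|=d(u)+d(v)-4$ and the complementary palette $[k]\setminus A$ has size $k-d(u)-d(v)+4\ge \Delta(G)$.

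The crucial piece of structure, extracted from criticality of $G$, is that for every color $c\in [k]\setminus A$ the assignment $\phi(uv)=c$ is proper, so it must create a bichromatic cycle; that cycle's second color lies in $C_\phi(u)\cap C_\phi(v)=\{\alpha,\beta\}$, forcing an $(\alpha,c,u,v)$- or $(\beta,c,u,v)$-critical path. Hence $c\in F_\phi(vv_1)\cup F_\phi(vv_2)$ and symmetrically $c\in F_\phi(uu_1)\cup F_\phi(uu_2)$. Under the contradictory hypothesis, some color of $A\setminus\{\alpha,\beta\}$ must have multiplicity $0$ in $\mathcal{S}$. Consider such a color $c_l\in C_\phi(u)\setminus C_\phi(v)$ (so $l\ge 3$ and $c_l\notin C_\phi(v_i)$ for every $i$); I would then uncolor $uu_l$ and reassign $\phi(uv)=c_l$. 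This is proper since $c_l\notin C_\phi(v)$ and $c_l$ no longer appears at $u$, and any bichromatic cycle through the new $uv$-edge would combine $c_l$ with $\alpha$ or $\beta$, requiring $c_l\in C_\phi(v_1)$ or $c_l\in C_\phi(v_2)$, contradicting the multiplicity-zero assumption. A parallel argument, uncoloring $vv_j$ and coloring $uv$ with $\gamma_j$, handles each $\gamma_j\in C_\phi(v)\setminus C_\phi(u)$ of multiplicity zero; the degree inequalities $d(v_1)+d(v_2)\ge \Delta(G)$ and $d(u_1)+d(u_2)\ge \Delta(G)$ from Fact~\ref{uv-edge-degree} supply the structural room on either side.

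The main obstacle is completing the modified coloring to one of all of $G$: once $uu_l$ (say) has been uncolored, I must find some $c'$ for it so that the resulting coloring remains acyclic. The proper-coloring constraints leave at least $k-(d(u)-1)-(d(u_l)-1)\ge 1$ candidates via the degree bound $d(u)\le k-\Delta(G)+1$, but avoiding bichromatic cycles through $uu_l$ is the technically delicate step; here I would invoke Fact~\ref{one-bichromatic-path} to bound the number of Kempe-chain obstructions at each color together with the critical-path information from the first ingredient to locate a safe $c'$. Finally, tallying multiplicities, if every color of $A\setminus\{\alpha,\beta\}$ were forced to contribute at least once to $\mathcal{S}$ and $\alpha,\beta$ supplied the remaining slack through the analogous swap arguments at $vv_1$ and $vv_2$, then $\|\mathcal{S}\cap A\|\ge d(u)$, contradicting the assumption and completing the proof.
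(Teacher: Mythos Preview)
Your argument has two genuine gaps, and the paper takes a different route that sidesteps both.

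First, the step you yourself call the ``main obstacle'' is not completed and does not obviously go through. After uncoloring $uu_l$ and setting $\phi(uv)=c_l$, the single proper color you exhibit for $uu_l$ must also avoid, for every $x\in C(u)\cap C(u_l)$, any $c'$ that closes an $(x,c')$-bichromatic cycle through $uu_l$. Fact~\ref{one-bichromatic-path} does \emph{not} bound the number of bad $c'$ by one per $x$: for distinct $c'$ you are looking at distinct color pairs $(x,c')$, so the uniqueness statement gives no transversal bound. Since $d(u_l)$ can be as large as $\Delta(G)$ and you have no hypothesis on $d(u)+d(u_l)$, there is no reason the one proper color survives. Second, even if both uncolor-and-shift arguments worked, you would only conclude that every color of $A\setminus\{\alpha,\beta\}$ has positive multiplicity in $\mathcal{S}$, giving $\|\mathcal{S}\cap A\|\ge |A|-2=d(u)+d(v)-6$. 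This falls short of $d(u)$ precisely when $d(v)\le 5$, which is the range used in the proof of Theorem~\ref{main-result}; the ``slack from $\alpha,\beta$'' you appeal to is never made concrete. In particular, your opening deduction that ``some color of $A\setminus\{\alpha,\beta\}$ must have multiplicity $0$'' does not follow from the contradictory hypothesis when $d(v)<6$.

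The paper's proof stays entirely on the $v$-side and never uncolors an edge at $u$. It first uses Lemma~\ref{uv-edge-colored} to show that each of $F_\phi(vv_1)$ and $F_\phi(vv_2)$ contains some color from $\{3,\dots,t\}=C_\phi(v)\setminus C_\phi(u)$ (otherwise recolor $vv_i$ from $T\setminus F_\phi(vv_i)$ and drop the overlap to $1$). It then shows that at most one color of $C_\phi(u)$ is absent from $\mathcal{S}$: if $\alpha,\beta\in C_\phi(u)$ were both absent, recolor $vv_1\to\alpha$, $vv_2\to\beta$; since $|T|\ge\Delta(G)$ there is $\gamma\in T\setminus F_\phi(uu')$ where $\phi(uu')=\alpha$, which forces a $(\beta,\gamma,u,v)$-critical path through $v_2$; now swap to $vv_1\to\beta$, $vv_2\to\alpha$ and set $uv\to\gamma$, and Fact~\ref{one-bichromatic-path} rules out any bichromatic cycle. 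These two pieces give $2+(d(u)-2)=d(u)$ occurrences, from disjoint color sets, with no re-extension needed.
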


\begin{proof}
Assume that $\phi(vv_i)=i$ for $i=1,2,\cdots,t$, $\{1,2\}\subseteq C_{\phi}(u)$. Let $T=[k]\setminus {C_{\phi}(v)\cup C_{\phi}(u)}$.  Then $|T|=k-|C_{\phi}(v)\cup C_{\phi}(u)|\ge k-(k-\Delta(G))=\Delta(G)$, and for any color $i\in T$, there is either a $(1,i,u,v)$-critical path or a $(2,i,u,v)$-critical path.
Note that for $i=1,2$, $F_{\phi}(vv_i)\cap \{3,\cdots,t\}\neq \emptyset,$ since otherwise, we can choose a color from $T\setminus F_{\phi}(vv_i)$ to recolor $vv_i$ and get a new acyclic edge coloring $\phi'$ of $G-uv$ with $|C_{\phi'}(v)\cap C_{\phi'}(u)|=1$ by Fact \ref{one-bichromatic-path}, a contradiction to Lemma \ref{uv-edge-colored}.

Now we show  $\mathcal{S}$ contains at least $d(u)-2$ colors from
$C_{\phi}(u)$, which implies that $\mathcal{S}$ contains at least
$d(u)$ colors from $C_{\phi}(u)\cup C_{\phi}(v)$. Otherwise, let
$\alpha,\beta$ be two colors in $C_{\phi}(u)$ but  not contain in
$\mathcal{S}$. Then recolor $vv_1$ with $\alpha$ and $vv_2$ with $\beta$. The
resulting coloring is an acyclic edge coloring $\phi'$ of $G-uv$.
Then for any color $i\in T$, there is either an
$(\alpha,i,u,v)$-critical path through $v_1$, or an
$(\beta,i,u,v)$-critical path through  $v_2$ with respect to
$\phi'$. Assume that $\phi(uu')=\alpha$ and $\phi(uu'')=\beta$.
Since $|T|\ge \Delta(G)$, there exists a color $\gamma\in T\setminus
F_{\phi}(uu')$. Then $\gamma\in F_{\phi}(uu'')$ and there is a
$(\beta,\gamma,u,v)$-critical path through $u''$ and $v_2$. Now we
give an acyclic edge $k$-coloring $\varphi$ of $G$ by
$\varphi(vv_1)=\beta$, $\varphi(vv_2)=\alpha$, $\varphi(uv)=\gamma$,
and $\varphi(e)=\phi(e)$ for the other edges of $G$, a
contradiction.

\end{proof}



\section{Proof of Theorem \ref{main-result}}

 \begin{pf1}
By contradiction, let $G$ be a planar graph with a fixed embedding in the plane and suppose that $G$ is an acyclically edge $(\Delta(G)+10)$-critical graph, i.e., $G$ is the minimum counterexample to the theorem in terms of the number of edges. Then $G$ is 2-connected by Lemma \ref{2-connected}. Let $v$ be a vertex of $G$. If $d(v)=2$, then the neighbors of $v$ are $13^+$-vertices by Lemma \ref{neighbor of 2-vertex}. If $d(v)=3$, then the neighbors of $v$ are $12^+$-vertices by Lemma \ref{lem3vertex}. If $d(v)\ge 12$, then $n_2(v)\le d(v)-12$ by Lemma \ref{number of 2-vertex}.

Now we show that $G$ contains the configuration $(\mathcal{A}_3)$ or $(\mathcal{A}_4)$. Let $H$ be the graph obtained by deleting all 2-vertices from $G$. Then for any vertex $v\in V(H)$, either $d_H(v)=d_G(v)\ge
3$, or $d_H(v)\ge 12$ by Lemma \ref{number of 2-vertex}. It follows from Lemma \ref{unavoidable
configurations} that there exists a $5^-$-vertex $v$ in $H$  such that $v$ belongs to one of the configurations $(\mathcal{A}_2), (\mathcal{A}_3), (\mathcal{A}_4)$. This implies that $d_{G}(v)=d_{H}(v)$ and $v$ belongs to one of the
configurations $(\mathcal{A}_2), (\mathcal{A}_3), (\mathcal{A}_4)$ in $G$ by Lemma \ref{number of 2-vertex}. However, if $d(v)=3$, then the neighbors of $v$ are $12^+$-vertices by Lemma \ref{lem3vertex}. Thus $G$ contain the configuration $(\mathcal{A}_3)$ or $(\mathcal{A}_4)$.

Suppose that $v$ has neighbors $v_1,v_2,\cdots,v_t$, where $t=3,4$,
with $d(v_1)\le d(v_2)\le \cdots \leq d(v_t)$, $v_1$ has neighbors
$v, w_1,w_2,\cdots,w_s$, and let $k=\Delta(G)+10$ for simplicity.
Then the graph $G'=G-vv_{1}$ admits an acyclic edge $k$-coloring
$\phi$ by the choice of $G$  with $\phi(v_1w_i)=i$ for $1\le i\le s$. Since $d(v)+d(v_1)\le 11$,
$|C_{\phi}(v)\cap C_{\phi}(v_1)|\geq2$ by Lemma
\ref{uv-edge-colored}. Let $T=[k]\setminus (C_{\phi}(v)\cup
C_{\phi}(v_1))$ and $\mathcal{S}=F_{\phi}(vv_2)\uplus \cdots
\uplus F_{\phi}(vv_t)$. Then  Now we derive an acyclic edge $k$-coloring of $G$
from $\phi$, a contradiction.

\medskip

 \textbf{Case 1.} $d(v)=4$ with $d(v_{1})\leq 7$ and $d(v_{2})\leq 9$.

\medskip

Note that $|T|=k-(s+1)\ge \Delta(G)+3$ and $||\mathcal{S}||\le 2(\Delta(G)-1)+8= 2\Delta(G)+6$. We consider the number of colors in $C_{\phi}(v)\cap C_{\phi}( v_{1})$.

\medskip

{\bf Subcase 1.1. } $|C_{\phi}(v)\cap C_{\phi}( v_{1})|=2.$

\medskip

Suppose that
$\phi(vx_i)=\phi(v_{1}w_{i})=i$ for $i=1,2$, where $i\in \{v_2,v_3,v_4\}$. Then for any color $s+2\le i\le
k$, there is either a $(1,i,v,v_1)$-critical path through $x_1$ and $w_1$, or a
$(2,i,v,v_1)$-critical path through $x_2$ and $w_2$ with respect to $\phi$. Since
$\mathcal{S}$ contains at least  $d(v_1)$ colors from
$C_{\phi}(v)\cup C_{\phi}(v_1)$ by Lemma \ref{uv-edge-colored-2},
there exists a color $\alpha\in T$ such that
$mult_{\mathcal{S}}(\alpha)=1$, say $\alpha\notin F_{\phi}(vv_{2})$.
Recoloring $vv_{2}$ with $\alpha$ results in an acyclic edge
coloring $\phi'$ of $G-vv_{1}$ with $|C_{\phi'}(v)\cap
C_{\phi'}(v_1)|=1$, a contradiction to Lemma \ref{uv-edge-colored}.

\medskip

 {\bf Subcase 1.2}
$|C_{\phi}(v)\cap C_{\phi}(v_{1})|=3.$

\medskip

In this case, $|T|= k-s\ge \Delta(G)+4$. This implies that there exists a
color $\alpha\in T$ such that $mult_{\mathcal{S}}(\alpha)=1$, say
$\alpha\notin F_{\phi}(vv_2)$.  Recoloring $vv_2$ with $\alpha$
results in an acyclic edge coloring $\phi'$ of $G-vv_{1}$ with
$|C_{\phi'}(v)\cap C_{\phi'}(v_1)|=2$, and the argument in Subcase
1.1 works.

\medskip

 {\bf Case 2}  $d(v)=5,$ with $d(v_{1})\leq6, d(v_{2})\leq7$ and
 $d(v_{3})\leq8$.

\medskip

Note that $|T|=k-(s+2)= \Delta(G)+8-s$ and $||\mathcal{S}||\le 2(\Delta(G)-1)+6+7= 2\Delta(G)+11$. We consider the number of colors in $C_{\phi}(v)\cap C_{\phi}( v_{1})$.

{\bf Subcase 2.1} $|C_{\phi}(v)\cap C_{\phi}(v_{1})|=2.$

Suppose that
$\phi(vx_i)=\phi(v_{1}w_{i})=i$ for $i=1,2$, where $i\in \{v_2,v_3,v_4\}$. Then for any color $s+2\le i\le
k$, there is either a $(1,i,v,v_1)$-critical path through $x_1$ and $w_1$, or a
$(2,i,v,v_1)$-critical path through $x_2$ and $w_2$ with respect to $\phi$. Since $\mathcal{S}$ contains at
least $s+1$ colors from $C_{\phi}(v)\cup C_{\phi}(v_1)$ by Lemma
\ref{uv-edge-colored-2}, there exists a color $\alpha\in T$, such that
$mult_{\mathcal{S}}(\alpha)=1$, say $\alpha\notin F_{\phi}(vv')$.
Recoloring $vv'$ with $\alpha$ results in an acyclic edge coloring
$\phi'$ of $G-vv_{1}$ with $|C_{\phi'}(v)\cap C_{\phi'}(v_1)|=1$, a
contradiction to Lemma \ref{uv-edge-colored}.

\medskip

{\bf Subcase 2.2} $|C_{\phi}(v)\cap C_{\phi}(v_{1})|=3.$

\medskip

Suppose that $N(v)=\{v_1,x_1,\cdots,x_4\}$, $\phi(vx_i)=\phi(v_{1}w_{i})=i$
for $i=1,2,3$ and $\phi(vx_4)=s+1$. Then for any color $i\in T$,
there is either a $(1,i,v,v_1)$-critical path through $x_1$, or a
$(2,i,v,v_1)$-critical path through $x_2$, or a
$(3,i,v,v_1)$-critical path through $x_3$. Let
$\mathcal{S'}=F_{\phi}(vx_{1})\uplus F_{\phi}(vx_{2})\uplus
F_{\phi}(vx_{3}) $. Note that $|T|=k-(s+1)=\Delta(G)+9-s\geq
\Delta(G)+4 $, $||\mathcal{S'}||\le 2(\Delta(G)-1)+7=2\Delta(G)+5$.
There exists a color $\alpha\in T$, such that
$mult_{\mathcal{S'}}(\alpha)=1$, say $\alpha\in F_{\phi}(vx_{1})$.
Recoloring $vx_{2}$ with $\alpha$ results in a proper edge coloring
$\phi'$ of $G-vv_{1}$ with $|C_{\phi'}(v)\cap C_{\phi'}(v_1)|=2$. If
there is no bichromatic cycle with respect to $\phi'$, we are done. Otherwise, there is a
$(s+1,\alpha,v,x_{2})$-critical path through $x_{4}$ with respect to
$\phi'$. Let $\phi''(vx_3)=\alpha$, $\phi''(e)=\phi(e)$ for the
other edges $e\in G-vv_1$. Then $\phi''$ is an acyclic edge coloring
of $G-vv_1$ with $|C_{\phi''}(v)\cap C_{\phi''}(v_1)|=2$ by Fact
\ref{one-bichromatic-path} and the argument in Subcase 2.1 works.

\medskip

\medskip

{\bf Subcase 2.3}  $|C_{\phi}(v)\cap C_{\phi}(v_{1})|=4$.

\medskip

Suppose that $\phi(vv_i)=i-1$ for $2\le i\le 5$. Recolor $vv_2$ with a color
from $T\setminus F_{\phi}(vv_2)$ and get an proper edge coloring
$\phi'$ of $G-vv_1$. If  $\phi'$ does not contain bichromatic cycle,
then $\phi'$ is acyclic with  $|C_{\phi'}(v)\cap
C_{\phi'}(v_{1})|=3$, and the argument in Subcase 2.2 works.
Otherwise, $F_{\phi}(vv_2)\cap\{2,3,4\}\neq \emptyset$. Similarly,
for $i=3,4,5$, $F_{\phi}(vv_i)\cap\{1,2,3,4\}\neq \emptyset$. Thus
$\mathcal{S}$ contains at least four colors from $\{1,2,3,4\}$. Since
$|T|=k-s\ge \Delta(G)+5$ and $||\mathcal{S}||\le 2\Delta(G)+11$,
there exists a color $\alpha\in T$, such that $mult_{\mathcal{S}}(\alpha)=1$,
say $\alpha\notin F_{\phi}(vv_1)$. Recoloring $vv_1$ with $\alpha$
results in an acyclic edge coloring $\phi''$ of $G-vv_{1}$ with
$|C_{\phi''}(v)\cap C_{\phi''}(v_1)|=3$, the argument in Subcase
2.2 works.

In any case, we can get an acyclic edge coloring of $G$ with
$\Delta(G)+10$ colors, a contradiction.
\end{pf1}

Determining the acyclic chromatic index of a graph is a hard
problem both from theoretical and algorithmic points of view. Even
for complete graphs, the acyclic chromatic index is still not
determined exactly.

\begin{problem} Determine  the acyclic  chromatic index of
planar graphs.
\end{problem}


\begin{thebibliography}{99}
\bibitem{Alon1991} {N. Alon},
A parallel algorithmic version of the Local Lemma,  Random Structures
Algorithms 2 (1991) 367--378.
\bibitem{Alon2001} N. Alon, B. Sudakov and  A. Zaks,  Acyclic edge
colorings of graphs,  J. Graph Theory 37 (2001) 157--167.


\bibitem{Alon2002} N. Alon,  A. Zaks, Algorithmic aspects of acyclic
edge colorings, Algorithmica  32 (2002) 611--614.



\bibitem{and2012} L.D. Andersen,E. M\'{a}\v{c}ajov\'{a}, J. Maz\'{a}k, Optimal acyclic
edge-coloring of cubic graphs, J. Graph Theory DOI 10.1002/jgt.20650.

\bibitem{basaSIAM} M. Basavaraju, L.S. Chandran, N. Cohen, F. Havet and T. M\"{u}ller,
Acyclic edge-coloring of planar graphs,  SIAM J. Discrete Math 25
(2) (2011) 463--478.
\bibitem{basa2009} M. Basavaraju, L.S. Chandran, Acyclic edge coloring of graphs with maximum degree
4,  J. Graph Theory 61 (3) (2009) 192--209.
\bibitem{basa2008} M. Basavaraju, L.S. Chandran, Acyclic edge coloring of
subcubic graphs,  Discrete Mathematics 308 (2008) 6650--6653.





\bibitem{boro2009} M. Borowiecki, A. Fiedorowicz, Acyclic edge colouring of
planar graphs without short cycles,  Discrete Mathematics 310 (2010)
1445--1455.


\bibitem{fie2008} A. Fiedorowicz,  M. Ha{\l }uszczak, N. Narayanan, About acyclic edge colourings of planar graphs,  Information Processing Letters 108 (2008) 412--417.



\bibitem{hou2011}J. Hou, N. Roussel, J. Wu, Acyclic chromatic index of planar graphs with triangles,  Information Processing Letters, 111 (2011) 836--840.

\bibitem{hou2009} J. Hou, J. Wu, G. Liu and B. Liu, Acyclic edge chromatic
number of outerplanar graphs,  J. Graph Theory, 64 (2010) 22--36.
\bibitem{hou2008} J. Hou, J. Wu, G. Liu and B. Liu, Acyclic edge colorings
of planar graphs and seriell-parallel graphs,  Sciences in China A 52
(2009) 605--616.


\bibitem{molloy1998} {M. Molloy,  B. Reed}, Further Algorithmic
Aspects of the Local Lemma,  Proceedings of the 30th Annual ACM
Symposium on Theory of Computing (1998) 524--529.






\end{thebibliography}
\end{document}